\documentclass[11pt]{amsart}
\usepackage{amsmath,amssymb,amsthm,enumitem,hyperref,mathtools}
\usepackage[margin=1.05in]{geometry}
\usepackage[T1]{fontenc}
\usepackage{lmodern}
\usepackage{microtype}
\usepackage{longtable}
\usepackage{booktabs}
\usepackage{array}
\hypersetup{colorlinks=true, linkcolor=blue, citecolor=blue, urlcolor=blue}

\newtheorem{theorem}{Theorem}[section]
\newtheorem{lemma}[theorem]{Lemma}

\theoremstyle{definition}
\newtheorem{definition}[theorem]{Definition}
\newtheorem{example}[theorem]{Example}
\theoremstyle{remark}
\newtheorem{remark}[theorem]{Remark}

\newcommand{\D}{\mathcal D}
\newcommand{\Oparts}{\mathcal O}
\newcommand{\Aone}{\mathcal A_1}
\newcommand{\Atwo}{\mathcal A_2}
\newcommand{\mult}{\operatorname{mult}}
\newcommand{\length}{\ell}
\newcommand{\modulo}[1]{\;(\mathrm{mod}\ #1)}

\title[A bijective proof of a partition theorem of Berkovich and Uncu]{A bijective proof of a partition theorem of Berkovich and Uncu}

\author{Michal Mogielnicki, Ken Ono, Niels Voss, and Jujian Zhang}
\address{Axiom Math, 124 University Avenue, Palo Alto, CA 94301}
\email{michal@axiommath.ai}
\email{ken@axiommath.ai}
\email{niels@axiommath.ai}

\begin{document}
\begin{abstract}
In 2016, Berkovich and Uncu proved that, for all nonnegative integers $i$, $j$, and $n$, the number of strict partitions of $n$ with $i$ odd-indexed odd parts and $j$ even-indexed odd parts equals the number of strict partitions of $n$ with $i$ parts congruent to $1$ modulo $4$ and $j$ parts congruent to $3$ modulo $4$. Their proof used generating functions, and they asked for a combinatorial proof. We answer their question with an explicit bijection, assembled from three classical ingredients: $2$-modular diagrams, an insertion algorithm of Chen, Gao, Ji, and Li, and Glaisher's bijection.
AxiomProver autonomously formalized and verified  the proof of the main theorem in Lean.
\end{abstract}

\subjclass[2020]{Primary 05A17; Secondary 05A19, 68V15}

\keywords{integer partitions, Glaisher's theorem, partition bijections, Berkovich--Uncu identity, $2$-modular diagram, formal verification}

\maketitle

\section{Introduction}\label{sec:intro}
We begin by fixing notation. A \emph{partition} $\lambda$ of $n$, written $\lambda\vdash n$, is a weakly decreasing sequence of positive integers
\[
  \lambda=(\lambda_1\ge\lambda_2\ge\cdots\ge\lambda_\ell),
  \qquad \lambda_1+\cdots+\lambda_\ell=n,
\]
and we write \(\length(\lambda)=\ell\) for its number of parts. For a positive integer $m$, we write $\mult_\lambda(m)$ for the number of parts of $\lambda$ equal to $m$.
A partition $\lambda \vdash n$ is \emph{strict} if its parts are distinct, i.e.\ a strict
partition of \(n\) has the form
\[
  \lambda=(\lambda_1>\lambda_2>\cdots>\lambda_\ell),
  \qquad \lambda_1+\cdots+\lambda_\ell=n.
\]
For such a strict partition define
\[
  e_1(\lambda)=\#\{1\le r\le \length(\lambda): \lambda_r\equiv 1\modulo 2,\ r\equiv 1\modulo 2\},
\]
and
\[
  e_3(\lambda)=\#\{1\le r\le \length(\lambda): \lambda_r\equiv 1\modulo 2,\ r\equiv 0\modulo 2\}.
\]
Thus \(e_1(\lambda)\) counts the odd parts occurring in an odd-indexed position,
while \(e_3(\lambda)\) counts the odd parts occurring in an even-indexed position.
For an arbitrary partition \(\lambda\) and \(a\in\{1,3\}\) put
\[
  r_a(\lambda)=\#\{1\le s\le \length(\lambda): \lambda_s\equiv a\modulo 4\}.
\]
The subscripts $1$ and $3$ on $e_1$ and $e_3$ anticipate Lemma~\ref{lem:phi} below: under the first of our four maps, odd-indexed odd parts become parts congruent to $1$ modulo $4$, and even-indexed odd parts become parts congruent to $3$ modulo $4$.

The following elegant identity is the subject of this paper.

\begin{theorem}[Berkovich--Uncu {\cite[Theorem 1.1]{BU}}]\label{thm:BU}
For nonnegative integers \(i,j,n\), the number of strict partitions \(\lambda\vdash n\)
with
\[
   e_1(\lambda)=i,\qquad e_3(\lambda)=j
\]
equals the number of strict partitions \(\sigma\vdash n\) with
\[
   r_1(\sigma)=i,\qquad r_3(\sigma)=j.
\]
\end{theorem}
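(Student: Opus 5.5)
We construct an explicit bijection $\Psi$ from strict partitions of $n$ to strict partitions of $n$ such that $r_1(\Psi(\lambda))=e_1(\lambda)$ and $r_3(\Psi(\lambda))=e_3(\lambda)$. It will be a composition of four maps, each a bijection onto its image with weight $n$ preserved, and we track the statistics through each. Given strict $\lambda$ of length $\ell$, the difficulty is that $e_1,e_3$ depend on the positions of the odd parts, and these positions depend on how many even parts lie above each odd part. The plan is to first separate the odd parts from the even parts, recording each odd part's position parity in its residue modulo $4$. Then we repair the leftover even material so that the final partition is strict.

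\emph{Step 1 (the map $\phi$ of Lemma~\ref{lem:phi}).} Draw $\lambda$ as a $2$-modular diagram: row $r$ has $\lceil \lambda_r/2\rceil$ cells, all equal to $2$ except a final $1$ when $\lambda_r$ is odd. Strictness means consecutive rows differ in length in a controlled way. We peel off from each odd part $\lambda_r$ a staircase-type correction, adding or subtracting an even amount depending on the number of parts below it. The goal is an odd part $\lambda_r$ at position $r$ producing an odd part congruent to $1$ or $3 \modulo 4$ according as $r$ is odd or even. The removed even cells form an auxiliary partition into even parts, possibly with repetitions. The output is a pair $(\alpha,\beta)$ with $\alpha$ strict with all parts odd, $r_1(\alpha)=e_1(\lambda)$, $r_3(\alpha)=e_3(\lambda)$, $\beta$ an ordinary partition into even parts, and $|\alpha|+|\beta|=n$. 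The invertibility of $\phi$ is what Lemma~\ref{lem:phi} should provide. If instead $\phi$ only produces odd parts with multiplicity conditions, I will adjust $\beta$ accordingly.

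\emph{Steps 2--3 (Chen--Gao--Ji--Li insertion and Glaisher).} The pair $(\alpha,\beta)$ is not yet of the right shape: we need a strict $\sigma$ whose odd parts are distinct with prescribed residues mod $4$, and whose even parts are distinct. We use the Chen--Gao--Ji--Li insertion algorithm to absorb any residual interaction between $\alpha$ and $\beta$ (for example, a rank or length constraint coming from the staircase in Step 1) into $\beta$ alone, leaving the odd parts unchanged. What remains is an unrestricted partition of some $2m$ into even parts. Halving it gives a partition of $m$. Glaisher's bijection turns this into a partition of $m$ into distinct parts, and doubling gives a set of distinct even parts. Glaisher's map never touches odd parts, so $r_1$ and $r_3$ are untouched.

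\emph{Step 4 and the main obstacle.} Take $\sigma=\alpha\cup\gamma$, where $\gamma$ is the distinct even partition just produced. Then $\sigma$ is strict, since $\alpha$ is odd-strict and $\gamma$ is even-strict, and $r_1(\sigma)=e_1(\lambda)$, $r_3(\sigma)=e_3(\lambda)$. Conversely, every strict $\sigma$ splits uniquely this way, so running the steps backwards gives the inverse. As a sanity check we will verify the generating-function equality
\[
\sum_\lambda z^{e_1}y^{e_3}q^{|\lambda|}=\prod_{k\ge0}(1+zq^{4k+1})(1+yq^{4k+3})\prod_{k\ge1}(1+q^{2k})
\]
on small $n$. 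The step I expect to be hardest is Step 1 together with its compatibility with the insertion in Step 2. The position of an odd part changes whenever even parts are inserted above it, so we must show that the bookkeeping in $\beta$ exactly compensates, and that the insertion algorithm's output and inverse respect the residue classes. I would first attempt this by induction on the number of even parts of $\lambda$, checking that removing the largest even part commutes with $\phi$ up to a single insertion step.
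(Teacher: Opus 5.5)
You name the right ingredients, but their roles are misassigned, and Step 3 as stated is false. Glaisher's bijection matches partitions into \emph{odd} parts with partitions into distinct parts. There is no weight-preserving bijection from arbitrary partitions of $m$ onto strict ones: for $m=2$ there are two of the former and one of the latter. The same failure shows up globally. Pairs consisting of a strict partition into odd parts and an arbitrary partition into even parts have generating function $\prod_{k\ge0}(1+q^{2k+1})\prod_{k\ge1}(1-q^{2k})^{-1}$, which exceeds $\prod_{k\ge1}(1+q^k)$ in every coefficient from $q^4$ on. So the objects your Step 2 is said to produce cannot be in bijection with $\D(n)$.

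What makes the final step work is that every even part is $\equiv 2\modulo{4}$. Then halving gives a partition into odd parts, Glaisher applies, and doubling back gives distinct even parts while the odd parts pass through untouched. This is $G\circ\psi^{-1}$ (Lemmas~\ref{lem:psiinv} and~\ref{lem:G}). Eliminating all parts divisible by $4$ is exactly the job of the Chen--Gao--Ji--Li map $\Phi:\Aone(n)\to\Atwo(n)$, and your Step 2 never identifies it. Your account of that map is also wrong. $\Phi$ does not leave the odd parts unchanged; it shifts parts, odd ones included, by multiples of $4$ (e.g.\ $\Phi(7,4,4,1)=(11,5)$ in Example~\ref{ex:Phi2}). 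Only residues modulo $4$ survive, which is all that $r_1$ and $r_3$ detect (Lemma~\ref{lem:Phi}).

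Step 1 is not a defined map. ``Peel off a staircase-type correction, adding or subtracting an even amount'' specifies no rule. You also cannot invoke Lemma~\ref{lem:phi} for the invertibility of a map you have not written down; that lemma is only a statistic transfer, and bijectivity of $\varphi$ is quoted from Chen--Gao--Ji--Li.

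The missing observation is that no correction is needed: take the column sums of the $2$-modular diagram. If $\lambda_r=2c-1$, strictness forces every row above row $r$ to have a $2$ in column $c$ and every row below to end before column $c$. So column $c$ sums to $2r-1$, which is $\equiv 1$ or $3\modulo{4}$ according as $r$ is odd or even. The result $\varphi(\lambda)$ is one partition in $\Aone(n)$. Its odd parts are exactly these numbers $2r-1$, and its even parts (possibly repeated, possibly divisible by $4$) are coupled to them by the gap conditions \textup{(A1.2)}--\textup{(A1.4)}. This coupled object, not a free pair, is what $\Phi$ untangles.

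Checking the product formula for small $n$, or an unexecuted induction on the number of even parts, cannot substitute for these constructions.
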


Berkovich and Uncu proved Theorem~\ref{thm:BU} by generating functions and noted
that it calls for a combinatorial proof \cite[Section 7]{BU}. We give such a proof, in the strongest sense: an explicit bijection, computable in both directions.

\begin{remark}\label{rem:FZ}
Theorem~\ref{thm:BU} has received combinatorial attention before. Fu and Zeng \cite[Theorem 1.2 and Section 4.1]{FZ} obtain it by dissecting labelled Ferrers diagrams to derive a bounded form of Boulet's four-parameter generating function, and then extracting coefficients. Their argument is combinatorial, but it passes through a cancellation of generating functions and yields an equality of coefficients rather than a map: no correspondence between the two families of partitions results. The contribution of the present paper is such a correspondence. Separately, Dhar and Mukhopadhyay \cite{DM} have given a bijective proof of a different identity of Berkovich and Uncu, namely the $q$-binomial formula \cite[Theorem 3.1]{BU} for the generating function of strict partitions with fixed BG-rank; the BG-rank is the \emph{difference} $e_1-e_3$, whereas Theorem~\ref{thm:BU} concerns the joint distribution of the pair $(e_1,e_3)$.
\end{remark}

Our bijection is a composition of four maps, each of which refines Euler's classical theorem that the number of strict partitions of $n$ equals the number of partitions of $n$ into odd parts. Beyond Euler's theorem, the paper is self-contained except for bijectivity statements quoted from Chen--Gao--Ji--Li \cite{CGJL} and Glaisher \cite{Glaisher}; no generating functions are used. After fixing the intermediate classes of partitions in
Section~\ref{sec:classes}, we introduce the four maps in Sections~\ref{sec:map1}--\ref{sec:map4},
each with a worked example and an accompanying lemma describing precisely which statistic it transports.
The maps are assembled in Section~\ref{sec:proof}, where Theorem~\ref{thm:BU} is
deduced.  Section~\ref{sec:numerics} illustrates the bijection: a single partition is traced through all four maps, and a table exhibits the bijection on an entire fiber. The proof of this result was autonomously formalized and verified in Lean by AxiomProver, a tool for mathematical research that is currently under development. Section~\ref{sec:AI} describes the accompanying Lean/mathlib formalization, including the protocol, and the input and output files. 

\section{The proof of the theorem}\label{sec:mainproof}

This section carries out the construction. Section~\ref{sec:classes} introduces the two intermediate classes of partitions through which the bijection travels and explains the strategy; Sections~\ref{sec:map1}--\ref{sec:map4} define the four maps; Section~\ref{sec:proof} composes them and proves the theorem.

\subsection{Intermediate classes}\label{sec:classes}
Let $\D(n)$ be the set of strict partitions of $n$ and $\Oparts(n)$ be the set of partitions of $n$ into odd parts. Following Chen--Gao--Ji--Li \cite[Section 2]{CGJL}, we define \(\Aone(n)\) to be the set of partitions \(\alpha=(\alpha_1\ge\cdots\ge\alpha_s)\vdash n\) satisfying:
\begin{enumerate}[label=\textup{(A1.\arabic*)}]
\item only even parts may be repeated;
\item \(\alpha_r-\alpha_{r+1}\le 4\) for all \(1\le r<s\);
\item \(\alpha_r-\alpha_{r+1}<4\) if $\alpha_r$ or $\alpha_{r+1}$ is even, for all \(1\le r<s\);
\item the smallest part satisfies \(\alpha_s<4\) provided \(\alpha\) is nonempty,
\end{enumerate}
and \(\Atwo(n)\) be the set of partitions \(\beta\vdash n\) satisfying:
\begin{enumerate}[label=\textup{(A2.\arabic*)}]
\item no part is divisible by \(4\);
\item only even parts may be repeated.
\end{enumerate}
In any \(\beta\in\Atwo(n)\), every odd part occurs at most once,
and every part is congruent to \(1\), \(2\), or \(3\) modulo \(4\). Therefore,
\(r_1(\beta)\) and \(r_3(\beta)\) count only odd parts of \(\beta\).

\begin{example}\label{ex:classes}
The partition $(6,3,2,2,2,1)$ lies in $\Aone(16)$ and in $\Atwo(16)$. By contrast:
$(3,3,1)$ fails \textup{(A1.1)} (a repeated odd part);
$(7,2)$ fails \textup{(A1.2)} (a gap of $5$);
$(6,2,1)$ fails \textup{(A1.3)} (a gap of exactly $4$ between even parts);
$(5,4)$ fails \textup{(A1.4)} (smallest part $4$);
$(8,3)$ fails \textup{(A2.1)}; and $(3,3,2)$ fails \textup{(A2.2)}.
The classes $\Aone(n)$ and $\Atwo(n)$ are the $N=2$ case of families introduced by Chen--Gao--Ji--Li \cite{CGJL} for arbitrary modulus $N$.
\end{example}

Our bijection is the composition
\[
   F=G\circ \psi^{-1}\circ \Phi\circ \varphi:
   \D(n)\longrightarrow \D(n),
\]
where
\[
   \D(n)\xrightarrow{\ \varphi\ }\Aone(n)
        \xrightarrow{\ \Phi\ }\Atwo(n)
        \xrightarrow{\ \psi^{-1}\ }\Oparts(n)
        \xrightarrow{\ G\ }\D(n).
\]
Throughout the paper we reserve the letters $\alpha=\varphi(\lambda)$, $\beta=\Phi(\alpha)$, $\mu=\psi^{-1}(\beta)$, and $\sigma=G(\mu)=F(\lambda)$ for the successive images of a strict partition $\lambda$.

Before defining the maps we record why all four classes have the same size, and sketch the strategy.

\begin{remark}[All four classes are equinumerous]\label{rem:euler}
Euler's theorem gives $|\D(n)|=|\Oparts(n)|$. The maps quoted below give $|\D(n)|=|\Aone(n)|$ \cite[Lemma 2.2]{CGJL} and $|\Oparts(n)|=|\Atwo(n)|$ \cite[Lemma 2.3]{CGJL}. Each of the four maps is therefore a bijection between equinumerous refinements of Euler's theorem; the substance of this paper is the bookkeeping of the statistics $(e_1,e_3)$ and $(r_1,r_3)$ along the way.
\end{remark}

The reader may keep the following picture in mind. The first map $\varphi$ converts a \emph{position} statistic into a \emph{residue} statistic: an odd part of $\lambda$ in position $r$ becomes a part of $\alpha$ equal to $2r-1$, which is congruent to $1$ modulo $4$ when $r$ is odd and to $3$ modulo $4$ when $r$ is even (Lemma~\ref{lem:phi}). The second map $\Phi$ changes parts only by multiples of $4$, so it preserves the pair $(r_1,r_3)$ outright (Lemma~\ref{lem:Phi}). The last two maps trade multiplicities for binary expansions, and in doing so they preserve, value for value, the \emph{odd} parts (Lemmas~\ref{lem:psiinv} and~\ref{lem:G}). The pair of statistics therefore survives the entire journey.

\subsection{Map 1: \texorpdfstring{$2$}{2}-modular conjugation}\label{sec:map1}

The first map re-encodes a strict partition through its $2$-modular diagram (also called its $2$-modular Ferrers diagram, or MacMahon diagram), turning the parity of a part's \emph{position} into a residue class modulo $4$.

\begin{definition}[The map \(\varphi:\D(n)\to\Aone(n)\)]\label{def:phi}
Let $\lambda = (\lambda_1>\cdots>\lambda_\ell)\in\D(n)$. The \emph{$2$-modular diagram} of $\lambda$ has one row per part: each part is written as a row of $2$'s followed by a single $1$ if the part is odd. Namely, for $q\in\mathbb{N}=\{0,1,2,\dots\}$:
$$2q\ \longmapsto\ \underbrace{2\ 2\ \cdots\ 2}_{q},
  \qquad
  2q+1\ \longmapsto\ \underbrace{2\ 2\ \cdots\ 2}_{q}\ 1 .
$$
Then \(\varphi(\lambda)\) is the sequence of column sums of this diagram, read left to right. Since the entries of each row weakly decrease along the row, the column sums weakly decrease, so $\varphi(\lambda)$ is a partition of $n$; equivalently, $\varphi(\lambda)$ lists the row sums of the transposed (conjugated) diagram, whence the name of the map. That $\varphi(\lambda)$ lies in $\Aone(n)$, and that $\varphi$ is a bijection onto $\Aone(n)$, is \cite[Lemma 2.2]{CGJL}.
\end{definition}

\begin{example}\label{ex:phi}
Take $\lambda=(5,3,2)$, with odd parts $5$ and $3$ in positions $1$ and $2$, so $(e_1,e_3)=(1,1)$. The $2$-modular diagram and its transpose are
\[
\begin{array}{@{}ccc@{}}
2 & 2 & 1\\
2 & 1 & \\
2 & &
\end{array}
\qquad\longrightarrow\qquad
\begin{array}{@{}ccc@{}}
2 & 2 & 2\\
2 & 1 & \\
1 & &
\end{array}
\]
and the column sums of the left diagram (the row sums of the right one) give
\[
   \varphi(5,3,2)=(2+2+2,\ 2+1,\ 1)=(6,3,1).
\]
Note that $(6,3,1)$ has one part congruent to $1$ and one part congruent to $3$ modulo $4$, matching $(e_1,e_3)=(1,1)$; the next lemma shows this is no accident.
\end{example}

The following lemma is the engine of the whole proof: it converts the position statistic $(e_1,e_3)$ into the residue statistic $(r_1,r_3)$.

\begin{lemma}[Statistic transfer under \(\varphi\)]\label{lem:phi}
For all $\lambda\in\D(n)$, $r_1(\varphi(\lambda))=e_1(\lambda)$ and $r_3(\varphi(\lambda))=e_3(\lambda)$.
\end{lemma}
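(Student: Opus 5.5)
The plan is to compute the column sums of the $2$-modular diagram directly and read off their residues modulo $4$. Fix $\lambda=(\lambda_1>\cdots>\lambda_\ell)\in\D(n)$ and write $\lambda_r=2q_r+\epsilon_r$ with $\epsilon_r\in\{0,1\}$. Row $r$ of the diagram consists of $q_r$ entries equal to $2$ in columns $1,\dots,q_r$, followed by a $1$ in column $q_r+1$ when $\epsilon_r=1$. Hence the sum of column $c$ is
\[
  \alpha_c \;=\; 2\,\#\{r: q_r\ge c\} \;+\; \#\{r:\epsilon_r=1,\ q_r=c-1\}.
\]
So $\alpha_c$ is odd exactly when the number of odd parts $\lambda_r$ with $q_r=c-1$ is odd. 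The key structural claim is that this number is always $0$ or $1$.

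To prove the claim, suppose $\lambda_r,\lambda_{r'}$ with $r<r'$ are both odd and $q_r=q_{r'}$. Then $\lambda_r=\lambda_{r'}$, contradicting strictness. So every column contains at most one $1$. Consequently the odd parts of $\alpha=\varphi(\lambda)$ correspond bijectively to the odd parts of $\lambda$: the odd part $\lambda_r$ gives the odd column sum $\alpha_{q_r+1}$. It then suffices to show that this column sum equals $2r-1$, which settles both statements at once. Indeed, $2r-1\equiv1\pmod 4$ iff $r$ is odd, and an even column sum is never $\equiv 1,3\pmod 4$. Thus $r_1(\alpha)$ counts odd parts of $\lambda$ in odd positions, which is $e_1(\lambda)$, and likewise $r_3(\alpha)=e_3(\lambda)$. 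Distinct odd $\lambda_r$ give distinct columns $q_r+1$, so no double counting occurs.

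The main step, and the only one with real content, is the identity $\alpha_{q_r+1}=2r-1$ when $\lambda_r$ is odd. For $c=q_r+1$ we need $\#\{s: q_s\ge q_r+1\}=r-1$. For $s<r$, strictness gives $\lambda_s\ge\lambda_r+1=2q_r+2$, so $q_s\ge q_r+1$. For $s\ge r$, $\lambda_s\le\lambda_r=2q_r+1$, so $q_s\le q_r$. Hence exactly the rows $1,\dots,r-1$ have a $2$ in column $q_r+1$, and row $r$ contributes the $1$. The column sum is therefore $2(r-1)+1=2r-1$. Here we must be careful that the oddness of $\lambda_r$ is used: $\lambda_s>\lambda_r$ odd forces $\lambda_s\ge 2q_r+2$. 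With this identity in hand, the counting in the previous paragraph finishes the proof.
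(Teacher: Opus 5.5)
Your proof is correct and follows essentially the same route as the paper: for an odd row $r$ you locate the column $q_r+1$ (the paper's column $c$) holding its terminal $1$, show that exactly the $r-1$ rows above contribute a $2$ there and the rows below contribute nothing, and read off the column sum $2r-1$. Your explicit formula for $\alpha_c$ and the separate check that each column holds at most one $1$ simply make the paper's correspondence between odd columns and odd parts of $\lambda$ more explicit.
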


\begin{proof}
The columns of the \(2\)-modular diagram containing a terminal \(1\) are in
bijection with the odd parts of \(\lambda\); a column without a terminal \(1\) has an even sum, and a column with a terminal \(1\) has an odd sum. We compute these odd
sums. Suppose \(\lambda_r\) is odd, say \(\lambda_r=2c-1\). The row $r$ then consists of \(c-1\) entries equal to \(2\) followed by a terminal \(1\) in column \(c\). As \(\lambda\) is strict, every row above row \(r\) has size at least \(\lambda_r+1=2c\), hence contains at least \(c\) entries; its entry in column $c$ cannot be a terminal $1$, since a terminal $1$ in column $c$ belongs to a row of size $2c-1<2c$, so that entry is a $2$. Every row below row \(r\) has size at most \(\lambda_r-1=2c-2\), hence contains at most \(c-1\) entries and contributes nothing in column \(c\).
The sum of column \(c\) is therefore
\[
   2(r-1)+1=2r-1.
\]
If \(r\) is odd, \(2r-1\equiv 1\modulo 4\); if \(r\) is even,
\(2r-1\equiv 3\modulo 4\). Conversely, every odd part of \(\varphi(\lambda)\) is such a column sum and arises from a unique terminal \(1\)---a unique odd part of \(\lambda\). The parts of \(\varphi(\lambda)\) that are
congruent to \(1\modulo 4\) correspond exactly to the odd-indexed odd parts of
\(\lambda\), and those congruent to \(3\modulo 4\) to the even-indexed odd parts.
\end{proof}

\subsection{Map 2: the Chen--Gao--Ji--Li insertion map}\label{sec:map2}
The second map clears out the multiples of $4$. It is the \(N=2\) specialization of the Chen--Gao--Ji--Li insertion bijection \cite[Theorem 2.4 and Section 3]{CGJL}, which refines Bessenrodt's insertion algorithm \cite{Bess1991,Bess1995}; it is a bijection \(\Aone(n)\to\Atwo(n)\) by \cite[Section 3]{CGJL}. We restate the specialized algorithm, since we track its operations in the proof of Lemma~\ref{lem:Phi}. Phases (i) and (ii) below are Steps 1 and 2 of \cite[Section 3]{CGJL}, specialized to $N=2$; ``removable'' is our shorthand for their deletion criterion.

\begin{definition}[The map \(\Phi:\Aone(n)\to\Atwo(n)\)]\label{def:Phi}
Let \(\alpha\in\Aone(n)\). Throughout, the current object is a partition, written
in weakly decreasing order; ``reorder'' means re-sort into weakly decreasing
order. The algorithm has an extraction stage, producing a pair
\((\alpha^\ast,\delta)\), where \(\alpha^\ast\) is a partition of \(n-|\delta|\) satisfying \textup{(A1.1)}--\textup{(A1.4)} and \textup{(A2.1)}--\textup{(A2.2)}, and \(\delta\)
is a partition into positive multiples of \(4\), followed by an insertion stage.

\smallskip
\noindent\emph{Extraction.}
\begin{enumerate}[label=\textup{(\roman*)}]
\item \emph{Pre-extraction.} Repeat the following step until it no longer applies.
   Call a part \(\alpha_j\) of the current partition \emph{removable} if it is divisible by \(4\) and one of the following holds:
   \begin{itemize}
   \item \(\alpha_j\) is the largest part; or
   \item \(\alpha_j\) is an interior part and the pair
         \((\alpha_{j-1},\alpha_{j+1})\) of its neighbours satisfies the
         \(\Aone\) gap condition, namely \(\alpha_{j-1}-\alpha_{j+1}\le 4\), with strict
         inequality if either of \(\alpha_{j-1},\alpha_{j+1}\) is even.\footnote{The general algorithm of \cite[Section 3]{CGJL} also permits removing a \emph{smallest} part divisible by $4$. By Remark~\ref{rem:wd}(a), condition \textup{(A1.4)} keeps the smallest part below $4$ throughout this phase, so that case never arises for $N=2$ and we omit it.}
   \end{itemize}
   If several parts are removable, remove the largest of them (the leftmost such part if that value is repeated); this fixed rule makes the procedure deterministic. Record the removed value, unchanged, as a part of
   \(\delta\). (The gap test is what selects which multiples of \(4\) may be freed;
   ``removable'' is not a synonym for ``divisible by~\(4\).'')
\item \emph{Iterative extraction.} While the current partition still contains a
   multiple of \(4\), choose the \emph{largest} such part; suppose it occupies
   position \(s\) (counting from the largest part) and equals \(4m\). Subtract \(4\)
   from each of the \(s-1\) parts preceding it, delete the part \(4m\), reorder, and
   record as a part of \(\delta\) the multiple of \(4\)
   \[
      4(s-1)+4m .
   \]
   Repeat until no multiple of \(4\) remains. The partition that remains is
   \(\alpha^\ast\).
\end{enumerate}

\noindent\emph{Insertion.} Insert the recorded parts of \(\delta\) in weakly decreasing
order. To insert a part \(4h\), add \(4\) to each of the first \(h\) parts of the
current partition and then reorder. After all parts of \(\delta\) have been
inserted, the resulting partition is \(\Phi(\alpha)\).

\smallskip
The inverse \(\Phi^{-1}:\Atwo(n)\to\Aone(n)\) reverses the two stages: the
insertion steps are undone in weakly increasing order of the inserted parts, and the two
extraction phases are run in reverse. We do not reprove invertibility here; see \cite[Section 3]{CGJL} for the inverse
algorithm and the proof that \(\Phi\) is a bijection.
\end{definition}

Before giving examples we verify that every step of the algorithm is meaningful.

\begin{remark}[Well-definedness]\label{rem:wd}
Three observations keep the algorithm well defined.
\begin{enumerate}[label=\textup{(\alph*)}]
\item Throughout phase (i), the current partition satisfies \textup{(A1.1)}--\textup{(A1.4)} at its current weight: each removal deletes either the largest part or an interior part whose surrounding pair passes the gap test, and neither operation can violate the four conditions. In particular the smallest part stays below $4$.
\item If a multiple of $4$ is immediately preceded by an equal part, it is removable: it is not the smallest part (that part is below $4$ by (a)), and deleting it leaves the pair $(\alpha_{j-1},\alpha_{j+1})$ with $\alpha_{j-1}-\alpha_{j+1}=\alpha_j-\alpha_{j+1}<4$, the strict inequality holding by \textup{(A1.3)} since $\alpha_j$ is even. Hence when phase (ii) begins, the multiples of $4$ present are pairwise distinct, and each iteration of phase (ii) preserves this: the multiples of $4$ preceding the chosen part $4m$ exceed it, so they remain at least $4m$ after the subtraction, while those following it are at most $4m-4$, and the subtraction preserves the differences among the preceding parts. Consequently ``the largest multiple of $4$'' occupies a unique position $s$, and every part preceding position $s$ strictly exceeds $4m\ge 4$, so the subtractions leave all parts positive.
\item By \cite[Section 3]{CGJL}, the largest part of $\delta$ is at most $4\length(\alpha^\ast)$. Since inserting a part changes no part count, every intermediate partition in the insertion stage has $\length(\alpha^\ast)$ parts, and each insertion step is defined.
\end{enumerate}
\end{remark}

\begin{example}[Phase (i) only]\label{ex:Phi1}
Take \(\alpha=(6,4,3,2,1)\in\Aone(16)\). The only multiple of $4$ is the part $4$; deleting it leaves the neighbouring pair $(6,3)$, and as $6-3=3<4$, the $\Aone$ gap condition holds and $4$ is removable. We remove it and record $4$, leaving $$(6,3,2,1),\qquad \delta=(4).$$
No multiple of \(4\) remains, so iterative extraction is vacuous and
\(\alpha^\ast=(6,3,2,1)\). We now insert \(\delta=(4)=(4\cdot 1)\), adding \(4\) to
the first part:
\[
   (6,3,2,1)\longmapsto(10,3,2,1).
\]
Hence $$\Phi(6,4,3,2,1)=(10,3,2,1).$$
The residue counts are preserved: both \((6,4,3,2,1)\) and \((10,3,2,1)\) have
exactly one part congruent to \(1\modulo 4\) and one congruent to \(3\modulo 4\).
\end{example}

\begin{example}[Both phases]\label{ex:Phi2}
Take \(\alpha=(7,4,4,1)\), which lies in $\Aone(16)$: the only repeated part is the even part $4$, all gaps are $3,0,3$, and the smallest part is $1$. In pre-extraction (i) both $4$'s are candidates; following our rule we test the leftmost, in position $2$. Deleting it leaves the pair $(7,4)$ with difference $7-4=3<4$, so it is removable. We remove it and record $4$, leaving
\[
   (7,4,1),\qquad \delta=(4).
\]
A multiple of \(4\) remains (position \(2\)), but it is not removable:
deleting it would leave the pair \((7,1)\) with difference \(6>4\). So we
pass to iterative extraction (ii). The largest (and only) multiple of \(4\) is the
part \(4\) in position \(s=2\); we subtract \(4\) from the single preceding part
\(7\), delete the \(4\), and record \(4(2-1)+4=8\). This gives
\[
   \alpha^\ast=(3,1),\qquad \delta=(8,4).
\]
Now insert \(\delta\) in weakly decreasing order. Inserting \(8=4\cdot2\) adds \(4\) to the
first two parts:
\[
   (3,1)\longmapsto(7,5).
\]
Inserting \(4=4\cdot1\) adds \(4\) to the first part:
\[
   (7,5)\longmapsto(11,5).
\]
Hence $\Phi(7,4,4,1)=(11,5)$; again one part lies in each odd residue class modulo $4$ on both sides.
\end{example}

Since every operation of the algorithm shifts parts by multiples of $4$ or handles parts divisible by $4$, the residue statistics cannot change. The next lemma records this.

\begin{lemma}[Residue refinement of \(\Phi\)]\label{lem:Phi}
For every \(\alpha\in\Aone(n)\),
\[
   r_1(\Phi(\alpha))=r_1(\alpha),\qquad
   r_3(\Phi(\alpha))=r_3(\alpha).
\]
Consequently, for all \(n,i,j\ge 0\), \(\Phi\) restricts to a bijection
\[
   \{\alpha\in\Aone(n):r_1(\alpha)=i,\ r_3(\alpha)=j\}
   \longrightarrow
   \{\beta\in\Atwo(n):r_1(\beta)=i,\ r_3(\beta)=j\}.
\]
\end{lemma}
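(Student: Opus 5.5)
The plan is to track the multiset of residues modulo $4$ of the parts through each elementary operation of Definition~\ref{def:Phi}, with one invariant in mind: at every stage the current partition (ignoring $\delta$) has the same number of parts $\equiv1$ and $\equiv3 \pmod 4$ as $\alpha$. Parts of $\delta$ are multiples of $4$ and never enter the count, so only the working partition matters.

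Next I check each operation against this invariant. In pre-extraction (i), a part divisible by $4$ is deleted; this leaves $r_1$ and $r_3$ unchanged. In an iterative extraction step (ii), we subtract $4$ from each of the $s-1$ preceding parts, delete a part $4m$, and reorder. Subtracting $4$ preserves residues mod $4$, provided the parts stay positive, which is Remark~\ref{rem:wd}(b). Deleting $4m$ removes only a multiple of $4$, and reordering changes no part. In the insertion stage, adding $4$ to each of the first $h$ parts again preserves every residue, and reordering is harmless. The step is well defined because the current partition has at least $h$ parts, by Remark~\ref{rem:wd}(c).

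By induction on the (finite) number of steps, $r_a(\Phi(\alpha))=r_a(\alpha)$ for $a\in\{1,3\}$. Termination is clear: each step of (i) and (ii) reduces the number of multiples of $4$, and insertion consumes $\delta$. For the restriction statement, $\Phi:\Aone(n)\to\Atwo(n)$ is a bijection by \cite[Section 3]{CGJL}. Since $\Phi$ preserves $(r_1,r_3)$, it maps each fiber $\{r_1=i,r_3=j\}$ of $\Aone(n)$ injectively into the corresponding fiber of $\Atwo(n)$.

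Surjectivity onto each fiber also follows. Given $\beta$ in the target fiber, let $\alpha=\Phi^{-1}(\beta)$. Then $(r_1(\alpha),r_3(\alpha))=(r_1(\beta),r_3(\beta))=(i,j)$ by the invariance just proved, so $\alpha$ lies in the source fiber.

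The only delicate point is that every intermediate operation is genuinely defined: parts stay positive in (ii), and there are enough parts to insert into. This is exactly what Remark~\ref{rem:wd} supplies, so I expect no real obstacle beyond citing it carefully.
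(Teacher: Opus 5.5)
Your proposal is correct and follows essentially the same route as the paper. Both arguments track residues modulo $4$ through each operation of Definition~\ref{def:Phi} (deleting a multiple of $4$, shifting parts by $\pm 4$, reordering), use Remark~\ref{rem:wd} for positivity and well-definedness, and then restrict the bijection quoted from \cite{CGJL} to the fibers. Your explicit injectivity and surjectivity check on the fibers just spells out the paper's one-line restriction argument.
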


\begin{proof}
We track the effect of each elementary operation of Definition~\ref{def:Phi} on
the residues of parts modulo \(4\). The quantities \(r_1\) and \(r_3\) count parts
in the residue classes \(1\) and \(3\) modulo \(4\); both classes consist of odd
numbers.

In pre-extraction (i), the operation deletes a single part divisible by \(4\) and
records that same value. A part divisible by \(4\) lies in neither residue class
\(1\) nor \(3\); deleting it and recording it therefore changes neither \(r_1\) nor
\(r_3\) of the multiset of parts under consideration.

In iterative extraction (ii), each of the \(s-1\) surviving parts that precede the
chosen part \(4m\) is decreased from \(x\) to \(x-4\); by Remark~\ref{rem:wd}(b) we have \(x-4\ge 1\), so the part survives, and since \(x-4\equiv x\modulo 4\),
its residue is unchanged. The deleted part \(4m\) and the
recorded part \(4(s-1)+4m\) are both divisible by \(4\), hence lie outside the
classes counted by \(r_1,r_3\). Reordering permutes parts and does not affect
residue counts.

In insertion, inserting a recorded part \(4h\) increases each of the first \(h\)
parts from \(x\) to \(x+4\); again \(x+4\equiv x\modulo 4\), so residues are
preserved.

Thus no operation moves a part between the residue classes \(1\) and \(3\) modulo
\(4\), nor creates or destroys a part in either class. Hence
\(\bigl(r_1(\Phi(\alpha)),r_3(\Phi(\alpha))\bigr)=\bigl(r_1(\alpha),r_3(\alpha)\bigr)\).
Since \(\Phi:\Aone(n)\to\Atwo(n)\) is a bijection \cite[Section 3]{CGJL} and it
preserves the pair \((r_1,r_3)\), it restricts to a bijection between the
corresponding fibers, as claimed.
\end{proof}

The reader may wonder whether the two-phase structure of the extraction stage is really needed. It is.

\begin{remark}\label{rem:phases}
Both extraction phases are essential; neither yields a bijection $\Aone(n)\to\Atwo(n)$ on its own. If one skips pre-extraction (i) and applies only the iterative phase (ii), then \(\alpha=(5,4,1)\in\Aone(10)\) is sent to \((5,5)\), which has a repeated odd part and so lies outside \(\Atwo(10)\). If instead one keeps phase (i) but drops phase (ii), then \(\alpha=(6,4,1)\in\Aone(11)\) is returned unchanged (its part $4$ is not removable, since deleting it leaves the pair $(6,1)$ with gap $5$), and $(6,4,1)$ still contains a multiple of \(4\), so it lies outside \(\Atwo(11)\). The removability test of phase (i)---comparing, against the \(\Aone\) gap bound, the pair of neighbours left after a candidate deletion---is exactly the condition of Chen--Gao--Ji--Li \cite[Section 3, Step 1]{CGJL} specialized to \(N=2\); it guarantees that $\alpha^\ast$ satisfies both sets of conditions and the bound \(\delta_1\le 4\length(\alpha^\ast)\) on the largest part $\delta_1$ of $\delta$, under which the insertion phase returns a partition lying in \(\Atwo(n)\).
\end{remark}

\subsection{Map 3: from \texorpdfstring{$\Atwo(n)$}{A2(n)} to odd partitions}\label{sec:map3}

The third map dissolves the even parts of $\beta\in\Atwo(n)$: each part $2m$ with $m$ odd becomes two copies of $m$. What must be tracked is which odd values end up with odd multiplicity.

\begin{definition}[The map \(\psi^{-1}:\Atwo(n)\to\Oparts(n)\)]\label{def:psiinv}
For $\beta\in\Atwo(n)$ and odd $m\in\mathbb{Z}_+$, let $b_m$ be the multiplicity of $m$ in $\beta$ and $b_{2m}$ the multiplicity of $2m$ in $\beta$. Since $\beta \in \Atwo(n)$, no odd part repeats, so $b_m\in\{0,1\}$. We define $\mu=\psi^{-1}(\beta)$ by setting, for all odd $m$, $$\mult_\mu(m)=b_m+2b_{2m}.$$
By \textup{(A2.1)}, every part of $\beta$ is either odd or twice an odd number, so
\[
   |\mu|=\sum_{m\ \mathrm{odd}} m\,(b_m+2b_{2m})=|\beta|=n,
\]
and $\mu\in\Oparts(n)$.
This is the inverse of the Glaisher-type bijection $\psi:\Oparts(n)\to\Atwo(n)$ of Chen--Gao--Ji--Li \cite[Lemma 2.3]{CGJL}, under which an odd $m$ of multiplicity $k=2h+s$ in $\mu$ (with $s\in\{0,1\}$) contributes $h$ copies of $2m$ and $s$ copies of $m$. One checks directly that \(\psi\) and \(\psi^{-1}\) are mutually inverse, so \(\psi^{-1}\) is a bijection.
\end{definition}

\begin{example}\label{ex:psiinv}
Take $\beta = (10,3,2,1)$. For $m=5$: $b_5=0$ and $b_{10}=1$, so $\mult_\mu(5)=0+2\cdot1=2$.
For $m=3$: $b_3=1$ and $b_6=0$, so $\mult_\mu(3)=1$.
For $m=1$: $b_1=1$ and $b_2=1$, so $\mult_\mu(1)=1+2=3$.
Hence $$\psi^{-1}(10,3,2,1)=(5,5,3,1,1,1).$$
\end{example}

The lemma we need is that the parity of $\mult_\mu(m)$ remembers exactly whether $m$ was a part of $\beta$.

\begin{lemma}[Odd multiplicities after \(\psi^{-1}\)]\label{lem:psiinv}
Let $\mu=\psi^{-1}(\beta)$ and let $m$ be odd. Then $\mult_\mu(m)$ is odd if and only if $m$ is a part of $\beta$. In particular, the odd values occurring with odd multiplicity in $\mu$ are precisely the odd parts of $\beta$; the value $m$, and hence its residue modulo $4$, is the same on both sides.
\end{lemma}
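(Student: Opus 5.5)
The plan is to read the claim straight off the defining formula in Definition~\ref{def:psiinv}. For odd $m$ we have $\mult_\mu(m)=b_m+2b_{2m}$, so reducing modulo $2$ gives
\[
   \mult_\mu(m)\equiv b_m \pmod 2 .
\]
Condition \textup{(A2.2)}, as recorded in the definition, gives $b_m\in\{0,1\}$. Hence $\mult_\mu(m)$ is odd if and only if $b_m=1$, that is, if and only if $m$ is a part of $\beta$. This proves the first assertion.

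For the ``in particular'' part, I will observe that every part of $\mu$ is odd, because $\mu\in\Oparts(n)$. So the set of values with odd multiplicity in $\mu$ is
\[
   \{m \text{ odd}: \mult_\mu(m)\text{ odd}\},
\]
and by the first assertion this set equals $\{m\text{ odd}: b_m=1\}$, the set of odd parts of $\beta$. The correspondence is the identity on values, so residues modulo $4$ are trivially preserved.

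As a consequence to record for later use, the number of parts of $\beta$ congruent to $a\in\{1,3\}$ modulo $4$ equals the number of distinct values $m\equiv a \pmod 4$ that occur in $\mu$ with odd multiplicity. This holds because, in $\Atwo(n)$, $r_1$ and $r_3$ count only odd parts, each of which occurs once.

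I expect no real obstacle. The only points needing care are that $b_m\le 1$, which is exactly where \textup{(A2.2)} is used, and that the even parts $2m$ of $\beta$ contribute only even amounts to $\mult_\mu(m)$, so they cannot change its parity.
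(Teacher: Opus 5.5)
Your proof is correct and matches the paper's argument: reduce $\mult_\mu(m)=b_m+2b_{2m}$ modulo $2$ and use $b_m\in\{0,1\}$, which comes from \textup{(A2.2)}. Your remarks on the ``in particular'' clause and on counting parts in each odd residue class are also sound, and they anticipate how the lemma is used in the proof of Theorem~\ref{thm:main}.
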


\begin{proof}
By Definition~\ref{def:psiinv}, $\mult_\mu(m)=b_m+2b_{2m}\equiv b_m\modulo 2$. As $b_m\in\{0,1\}$, $\mult_\mu(m)$ is odd if and only if $b_m=1$, i.e.\ if and only if $m$ is a part of $\beta$.
\end{proof}

\subsection{Map 4: Glaisher's bijection}\label{sec:map4}
The final map is Glaisher's classical bijection from odd partitions to strict partitions: multiplicities are written in binary.

\begin{definition}[The map \(G:\Oparts(n)\to\D(n)\)]\label{def:G}
Let \(\mu\in\Oparts(n)\). For each odd \(m\), write the multiplicity of \(m\) in
binary,
\[
   \mult_\mu(m)=\sum_{c\ge 0}\epsilon_{m,c}\,2^c,
   \qquad \epsilon_{m,c}\in\{0,1\},
\]
and replace the \(\mult_\mu(m)\) copies of \(m\) by the parts \(2^c m\)
for those \(c\) with \(\epsilon_{m,c}=1\). These parts are pairwise
distinct: for a fixed $m$ the exponents $c$ differ, and parts arising from different odd values $m$ differ because every positive integer factors uniquely as $2^c m$ with $m$ odd. The result is therefore a strict partition \(G(\mu)\); see \cite{Glaisher}. The inverse of $G$ factors each part of a strict partition uniquely as \(2^c m\) with \(m\) odd and replaces it by \(2^c\) copies of
\(m\). (The map is recalled in
\cite[Section 1]{CGJL}.)
\end{definition}

\begin{example}\label{ex:G}
Take $\mu = (5,5,3,1,1,1)$. Then $\mult_\mu(5)=2=2^1$, contributing the part $2^1\cdot 5=10$; $\mult_\mu(3)=1=2^0$, contributing the part $3$; and $\mult_\mu(1)=3=2^1+2^0$, contributing the parts $2$ and $1$. Hence
$$G(5,5,3,1,1,1)=(10,3,2,1).$$
\end{example}

Mirroring Lemma~\ref{lem:psiinv}, the odd parts of $G(\mu)$ remember exactly which values had odd multiplicity in $\mu$.

\begin{lemma}[Odd parts after Glaisher]\label{lem:G}
Let $\sigma = G(\mu)$ and let $m$ be odd. Then $m$ is a part of $\sigma$ if and only if $\mult_\mu(m)$ is odd, and every odd part of $\sigma$ arises this way. In particular, the odd parts of $\sigma$ are precisely the odd values occurring with odd multiplicity in $\mu$; the value $m$, and hence its residue modulo $4$, is the same on both sides.
\end{lemma}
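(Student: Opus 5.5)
The plan is to read the statement directly off Definition~\ref{def:G}, using that every positive integer factors uniquely as $2^c m'$ with $m'$ odd. The key observation is this: a part of $\sigma=G(\mu)$ equal to the odd number $m$ can only be written as $2^c m'$ with $c=0$ and $m'=m$. So the part $m$ must come from the binary digit $\epsilon_{m,0}$ of $\mult_\mu(m)$. It cannot come from any other odd value $m'\ne m$, nor from any digit with $c\ge 1$, since those produce even parts.

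The steps, in order, are as follows. First, fix an odd $m$. By construction, $m$ is a part of $\sigma$ exactly when $\epsilon_{m,0}=1$. Second, since $\mult_\mu(m)=\sum_{c\ge0}\epsilon_{m,c}2^c\equiv \epsilon_{m,0}\modulo 2$, the condition $\epsilon_{m,0}=1$ holds if and only if $\mult_\mu(m)$ is odd. This proves the equivalence. Third, for the clause ``every odd part of $\sigma$ arises this way'', take any odd part $p$ of $\sigma$. It equals $2^c m'$ for a pair $(c,m')$ with $\epsilon_{m',c}=1$. Oddness of $p$ forces $c=0$ and $m'=p$, so $\mult_\mu(p)$ is odd by the second step. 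The ``in particular'' sentence then just restates this: the map $m\mapsto m$ is a value-preserving correspondence between the odd parts of $\sigma$ and the odd values of odd multiplicity in $\mu$, so residues modulo $4$ agree trivially. Strictness of $\sigma$ (already noted in Definition~\ref{def:G}) guarantees that each such $m$ occurs exactly once, which is what later counting via $r_1,r_3$ needs.

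I expect no real obstacle. The only point needing care is the third step: ruling out that an odd part of $\sigma$ could arise from a different odd base $m'$ or a positive exponent $c$. This is handled entirely by the uniqueness of the factorization $2^c m'$ together with the parity of $2^c m'$ for $c\ge1$.
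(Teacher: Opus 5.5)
Your proposal is correct and follows essentially the same argument as the paper. The only odd number among the parts $2^c m$ is the one with $c=0$; it appears exactly when $\epsilon_{m,0}=1$, i.e.\ when $\mult_\mu(m)$ is odd; and conversely every odd part of $\sigma$ must have the form $2^0 m$ with $m$ a value of $\mu$. Your explicit use of the parity of $2^c m'$ for $c\ge 1$ just spells out the paper's ``conversely'' step a little more carefully.
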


\begin{proof}
The parts of $\sigma$ produced from the copies of $m$ are the numbers $2^c m$ with $\epsilon_{m,c}=1$. Among these, only $2^0m=m$ is odd, and it occurs exactly when $\epsilon_{m,0}=1$, i.e.\ when $\mult_\mu(m)$ is odd. Conversely, every odd part of $\sigma$ has the form $2^0m=m$ for some odd value $m$ of $\mu$, since every part of $\sigma$ is $2^cm$ for some part value $m$ of $\mu$, all of which are odd. Therefore the odd parts of $\sigma$ are in value-preserving bijection with the odd-multiplicity values of $\mu$.
\end{proof}

\subsection{The bijection and its proof}\label{sec:proof}
We now assemble the four maps and prove the refined form of Theorem~\ref{thm:BU}. For fixed \(n,i,j\ge0\) set
\[
   \D_{i,j}(n)=\{\lambda\in\D(n): e_1(\lambda)=i,\ e_3(\lambda)=j\},
   \qquad
   \D'_{i,j}(n)=\{\sigma\in\D(n): r_1(\sigma)=i,\ r_3(\sigma)=j\},
\]
the fibers of the statistic maps $\lambda\mapsto(e_1(\lambda),e_3(\lambda))$ and $\sigma\mapsto(r_1(\sigma),r_3(\sigma))$ on $\D(n)$,
and recall \(F=G\circ\psi^{-1}\circ\Phi\circ\varphi\). Concretely, for
\(\lambda\in\D(n)\) one computes
\[
   \lambda \xmapsto{\ \varphi\ }
   \alpha \xmapsto{\ \Phi\ }
   \beta \xmapsto{\ \psi^{-1}\ }
   \mu \xmapsto{\ G\ }
   \sigma,
   \qquad F(\lambda)=\sigma .
\]

\begin{theorem}\label{thm:main}
For all \(n,i,j\ge 0\), the map \(F=G\circ\psi^{-1}\circ\Phi\circ\varphi\)
restricts to a bijection
\[
   F:\D_{i,j}(n)\longrightarrow \D'_{i,j}(n) .
\]
In particular, \(|\D_{i,j}(n)|=|\D'_{i,j}(n)|\), which is Theorem~\ref{thm:BU}.
\end{theorem}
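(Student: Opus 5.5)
The plan is to show that $F$ is a bijection $\D(n)\to\D(n)$ and that it carries the pair $(e_1,e_3)$ on the source to the pair $(r_1,r_3)$ on the target. Once both facts are in hand, the fibers $\D_{i,j}(n)$ are mapped bijectively onto the fibers $\D'_{i,j}(n)$, because a bijection that intertwines two statistic maps restricts to bijections between corresponding fibers.

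For bijectivity, I will simply compose the four known bijections. The map $\varphi:\D(n)\to\Aone(n)$ is a bijection by \cite[Lemma 2.2]{CGJL}, and $\Phi:\Aone(n)\to\Atwo(n)$ is a bijection by \cite[Section 3]{CGJL}. The map $\psi^{-1}:\Atwo(n)\to\Oparts(n)$ is a bijection by Definition~\ref{def:psiinv}, and $G:\Oparts(n)\to\D(n)$ is Glaisher's bijection by Definition~\ref{def:G}. Their composite is therefore a bijection $\D(n)\to\D(n)$.

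For the statistics, fix $\lambda\in\D(n)$ and write $\alpha,\beta,\mu,\sigma$ for its successive images, as in the paper.
\begin{enumerate}[label=\textup{(\arabic*)}]
\item Lemma~\ref{lem:phi} gives $(r_1(\alpha),r_3(\alpha))=(e_1(\lambda),e_3(\lambda))$.
\item Lemma~\ref{lem:Phi} gives $(r_1(\beta),r_3(\beta))=(r_1(\alpha),r_3(\alpha))$.
\item By the remark after the definition of $\Atwo(n)$, every part of $\beta$ counted by $r_1$ or $r_3$ is odd and occurs once. Hence $r_a(\beta)$ is the number of odd parts $m$ of $\beta$ with $m\equiv a\modulo 4$.
\item Lemma~\ref{lem:psiinv} identifies the odd parts of $\beta$, value for value, with the odd values $m$ having odd multiplicity in $\mu$.
\item Lemma~\ref{lem:G} identifies those values, again value for value, with the odd parts of $\sigma$.
\item Since $\sigma$ is strict, each odd part of $\sigma$ occurs once. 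Hence $r_a(\sigma)$ is the number of odd parts of $\sigma$ congruent to $a$ modulo $4$. Every part congruent to $1$ or $3$ modulo $4$ is odd, so nothing else is counted.
\end{enumerate}
Chaining these steps gives $(r_1(\sigma),r_3(\sigma))=(e_1(\lambda),e_3(\lambda))$.

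It follows that $F$ maps $\D_{i,j}(n)$ into $\D'_{i,j}(n)$, and $F^{-1}$ maps $\D'_{i,j}(n)$ into $\D_{i,j}(n)$. The second claim holds because any $\lambda$ with $F(\lambda)\in\D'_{i,j}(n)$ has $(e_1,e_3)(\lambda)=(r_1,r_3)(F(\lambda))=(i,j)$. So $F$ restricts to a bijection between the two fibers, and the equality $|\D_{i,j}(n)|=|\D'_{i,j}(n)|$ is exactly Theorem~\ref{thm:BU}.

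I expect no real obstacle. The only point needing care is step (3) together with step (6): $r_a$ counts parts with multiplicity, so I must check that the odd parts of $\beta$ and of $\sigma$ each occur only once. This holds by (A2.2) for $\beta$ and by strictness for $\sigma$. With that settled, the value-level correspondences in Lemmas~\ref{lem:psiinv} and~\ref{lem:G} become equalities of counts in each residue class.
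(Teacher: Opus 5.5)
Your proof is correct and follows the paper's argument: you compose the four quoted bijections, then chain Lemmas~\ref{lem:phi}, \ref{lem:Phi}, \ref{lem:psiinv} and~\ref{lem:G}, together with the observation that odd parts of $\beta$ and $\sigma$ occur only once, to get $(r_1,r_3)\circ F=(e_1,e_3)$. The only cosmetic difference is the last step: you show directly that $F^{-1}$ carries $\D'_{i,j}(n)$ into $\D_{i,j}(n)$ via this identity, whereas the paper deduces $F(\D_{i,j}(n))=\D'_{i,j}(n)$ from the fact that both fiber decompositions of $\D(n)$ are disjoint unions; both routes are valid.
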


\begin{proof}
Every factor of $F$ is a bijection between the indicated sets: \(\varphi:\D(n)\to\Aone(n)\) by \cite[Lemma 2.2]{CGJL}, \(\Phi:\Aone(n)\to\Atwo(n)\) by \cite[Section 3]{CGJL}, \(\psi^{-1}:\Atwo(n)\to\Oparts(n)\) by \cite[Lemma 2.3]{CGJL}, and \(G:\Oparts(n)\to\D(n)\) by \cite{Glaisher}. Their composition $F$ is therefore a bijection of $\D(n)$ onto itself. It remains to show that $F$ carries each fiber $\D_{i,j}(n)$ onto the corresponding fiber $\D'_{i,j}(n)$.

First we show \(F(\D_{i,j}(n))\subseteq\D'_{i,j}(n)\). Let \(\lambda\in\D_{i,j}(n)\) and \(\alpha=\varphi(\lambda)\), \(\beta=\Phi(\alpha)\), \(\mu=\psi^{-1}(\beta)\), \(\sigma=G(\mu)=F(\lambda)\). By
Lemma~\ref{lem:phi},
\[
   r_1(\alpha)=e_1(\lambda)=i,\qquad r_3(\alpha)=e_3(\lambda)=j .
\]
By Lemma~\ref{lem:Phi},
\[
   r_1(\beta)=r_1(\alpha)=i,\qquad r_3(\beta)=r_3(\alpha)=j .
\]
As \(\beta\in\Atwo(n)\) has no part divisible by \(4\), every part of
\(\beta\) is congruent to \(1\), \(2\), or \(3\) modulo \(4\); the parts counted by
\(r_1(\beta)\) and \(r_3(\beta)\) are therefore exactly the odd parts of \(\beta\)
in the residue classes \(1\) and \(3\) modulo \(4\), respectively. By
Lemma~\ref{lem:psiinv}, these odd parts of \(\beta\) are, value for value, the odd values occurring with odd multiplicity in \(\mu\), with residue modulo \(4\)
preserved. By Lemma~\ref{lem:G}, those odd-multiplicity values of \(\mu\) are,
value for value and residue for residue, the odd parts of \(\sigma\). Finally,
every odd part of \(\sigma\) lies in the residue class \(1\) or \(3\) modulo \(4\),
and no even part of \(\sigma\) can lie in either; hence \(r_1(\sigma)\) and
\(r_3(\sigma)\) count exactly the odd parts of \(\sigma\) in the respective
classes. Combining the three correspondences,
\[
   r_1(\sigma)=r_1(\beta)=i,\qquad r_3(\sigma)=r_3(\beta)=j ,
\]
so \(\sigma\in\D'_{i,j}(n)\). This proves \(F(\D_{i,j}(n))\subseteq\D'_{i,j}(n)\) for every pair $(i,j)$.

Now the sets \(\D_{i,j}(n)\), as \((i,j)\) ranges over pairs of nonnegative integers, are pairwise disjoint with union \(\D(n)\), since each \(\lambda\) determines its pair \((e_1(\lambda),e_3(\lambda))\); likewise the sets \(\D'_{i,j}(n)\) are pairwise disjoint with union \(\D(n)\). Because \(F\) is a bijection of \(\D(n)\),
\[
   \D(n)=F(\D(n))=\bigsqcup_{i,j\ge 0} F(\D_{i,j}(n))
   \subseteq \bigsqcup_{i,j\ge 0} \D'_{i,j}(n)=\D(n),
\]
so the containment in the middle is an equality of disjoint unions. Since each \(F(\D_{i,j}(n))\) is contained in \(\D'_{i,j}(n)\) and the latter sets are pairwise disjoint, this forces
\(F(\D_{i,j}(n))=\D'_{i,j}(n)\) for every \((i,j)\). Hence the restriction \(F:\D_{i,j}(n)\to\D'_{i,j}(n)\) is a bijection, and equating cardinalities
yields Theorem~\ref{thm:BU}.
\end{proof}

\begin{remark}
The inverse bijection is the composition of the inverses of the four factors,
\[
   F^{-1}=\varphi^{-1}\circ\Phi^{-1}\circ\psi\circ G^{-1}
   :\D(n)\longrightarrow\D(n),
\]
each of which is described explicitly in the corresponding definition. Thus $F$ and $F^{-1}$ are both computable by elementary manipulations of diagrams and multiplicities.
\end{remark}

\section{Examples and verification}\label{sec:numerics}

This section makes the bijection concrete. Section~\ref{sec:example} traces a single strict partition through all four maps, and Section~\ref{sec:table} tabulates $F$ on an entire fiber, exhibiting the bijection of Theorem~\ref{thm:main} explicitly for $n=16$ and $i=j=1$.

\subsection{A worked example}\label{sec:example}
Take \(\lambda=(7,6,2,1)\in\D(16)\), which has odd parts $7$ and $1$ in positions $1$ and $4$, respectively, so that $(e_1,e_3)=(1,1)$.
\paragraph{Map 1.}
The \(2\)-modular diagram of $\lambda$ is
\[
\begin{array}{@{}cccc@{}}
2 & 2 & 2 & 1\\
2 & 2 & 2 & \\
2 & & & \\
1 & & &
\end{array}
\]
with rows $2\,2\,2\,1$, $2\,2\,2$, $2$, $1$ recording the parts $7,6,2,1$. Reading the column sums,
\[
   \alpha=\varphi(7,6,2,1)=(7,\ 4,\ 4,\ 1),
\]
and one checks that $(7,4,4,1)\in\Aone(16)$: only the even part $4$ repeats, the gaps are $3,0,3$, and the smallest part is $1$.

\paragraph{Map 2.}
This is precisely Example~\ref{ex:Phi2}: pre-extraction removes the upper $4$ and records $4$; iterative extraction turns $(7,4,1)$ into $\alpha^\ast=(3,1)$ and records $8$; and inserting $\delta=(8,4)$ gives
\[
   \beta=\Phi(7,4,4,1)=(11,5).
\]

\paragraph{Map 3.}
As all parts of $\beta=(11,5)$ are odd, $\psi^{-1}$ fixes it: \(\mu=(11,5)\).

\paragraph{Map 4.}
As all multiplicities in \(\mu\) are \(1\), Glaisher's map fixes it:
\[
   F(7,6,2,1)=G(11,5)=(11,5).
\]
\paragraph{Statistic transfer.} The output has one part congruent to \(1\modulo 4\) (namely \(5\)) and one congruent to \(3\modulo 4\) (namely \(11\)), in agreement with \((e_1,e_3)=(1,1)\).

\subsection{Verification table}\label{sec:table}

Take $n=16$ and $i=j=1$. The set $\D_{1,1}(16)$ of strict partitions of $16$ with $e_1=e_3=1$ has $15$ elements, listed in the first column below; the remaining columns give the successive images $\alpha$, $\beta$, $\mu$, and $F(\lambda)$. Each output is strict with \(r_1(\sigma)=r_3(\sigma)=1\), and the outputs are pairwise distinct; thus the table exhibits the bijection \(F:\D_{1,1}(16)\to\D'_{1,1}(16)\) explicitly. The row $\lambda=(9,5,2)$ strings together Examples~\ref{ex:Phi1}, \ref{ex:psiinv}, and~\ref{ex:G}, and the row $\lambda=(7,6,2,1)$ is the worked example of Section~\ref{sec:example}.

\setlength{\tabcolsep}{2pt}
{\footnotesize
\begin{longtable}{@{}>{\raggedright\arraybackslash}p{0.13\linewidth}>{\raggedright\arraybackslash}p{0.23\linewidth}>{\raggedright\arraybackslash}p{0.17\linewidth}>{\raggedright\arraybackslash}p{0.28\linewidth}>{\raggedright\arraybackslash}p{0.13\linewidth}@{}}
\toprule
\(\lambda\) & \(\alpha=\varphi(\lambda)\) & \(\beta=\Phi(\alpha)\) & \(\mu=\psi^{-1}(\beta)\) & \(F(\lambda)=G(\mu)\) \\
\midrule
\endhead
(15, 1) & (3, 2, 2, 2, 2, 2, 2, 1) & (3, 2, 2, 2, 2, 2, 2, 1) & (3, 1, 1, 1, 1, 1, 1, 1, 1, 1, 1, 1, 1, 1) & (8, 4, 3, 1) \\
(13, 3) & (4, 3, 2, 2, 2, 2, 1) & (7, 2, 2, 2, 2, 1) & (7, 1, 1, 1, 1, 1, 1, 1, 1, 1) & (8, 7, 1) \\
(12, 3, 1) & (5, 3, 2, 2, 2, 2) & (5, 3, 2, 2, 2, 2) & (5, 3, 1, 1, 1, 1, 1, 1, 1, 1) & (8, 5, 3) \\
(11, 5) & (4, 4, 3, 2, 2, 1) & (11, 2, 2, 1) & (11, 1, 1, 1, 1, 1) & (11, 4, 1) \\
(11, 3, 2) & (6, 3, 2, 2, 2, 1) & (6, 3, 2, 2, 2, 1) & (3, 3, 3, 1, 1, 1, 1, 1, 1, 1) & (6, 4, 3, 2, 1) \\
(10, 5, 1) & (5, 4, 3, 2, 2) & (9, 3, 2, 2) & (9, 3, 1, 1, 1, 1) & (9, 4, 3) \\
(9, 7) & (4, 4, 4, 3, 1) & (15, 1) & (15, 1) & (15, 1) \\
(9, 5, 2) & (6, 4, 3, 2, 1) & (10, 3, 2, 1) & (5, 5, 3, 1, 1, 1) & (10, 3, 2, 1) \\
(9, 4, 2, 1) & (7, 4, 2, 2, 1) & (7, 6, 2, 1) & (7, 3, 3, 1, 1, 1) & (7, 6, 2, 1) \\
(8, 7, 1) & (5, 4, 4, 3) & (13, 3) & (13, 3) & (13, 3) \\
(8, 5, 3) & (6, 5, 3, 2) & (6, 5, 3, 2) & (5, 3, 3, 3, 1, 1) & (6, 5, 3, 2) \\
(8, 4, 3, 1) & (7, 5, 2, 2) & (7, 5, 2, 2) & (7, 5, 1, 1, 1, 1) & (7, 5, 4) \\
(7, 6, 2, 1) & (7, 4, 4, 1) & (11, 5) & (11, 5) & (11, 5) \\
(7, 5, 4) & (6, 6, 3, 1) & (6, 6, 3, 1) & (3, 3, 3, 3, 3, 1) & (12, 3, 1) \\
(6, 5, 3, 2) & (8, 5, 3) & (9, 7) & (9, 7) & (9, 7) \\
\bottomrule
\end{longtable}
}

\section*{Acknowledgements}
The authors thank Aritram Dhar for his helpful comments.

\section*{Process}\label{sec:AI}
The mathematics of this paper, the four-map bijection and its statistic
bookkeeping, was worked out by the human authors. AxiomProver was asked to
formalize and verify that argument, not to discover it. 
The formal proofs provided were verified using Lean~4.31.0 \cite{Lean}.
Compatibility with earlier or later versions is not guaranteed due to the
evolving nature of the Lean 4 compiler and its core libraries.
The relevant files are all posted in the following repository:
\begin{center}
  \url{https://github.com/AxiomMath/BerkovichUncu}
\end{center}
The formalization consisted of \texttt{input/}, which contained the following:
\begin{itemize}
\item \texttt{task.md}: a natural-language description of the problem to be proven and formalized.
\item \texttt{main-5.tex}: a draft of the paper.
\end{itemize}
Given these files,
AxiomProver autonomously produced the following output files, collected under \texttt{BerkovichUncu/}:
\begin{itemize}
 \item \texttt{problem.lean}: a translation of the problem statement into Lean.
 \item \texttt{solution.lean}: the formal, machine-checked solution in Lean.
\end{itemize}
The translation of a paper had to assume certain facts, which are described in the
\texttt{README.md} of the repository. Each is either quoted above from
\cite{CGJL} and \cite{Glaisher} or established above together with
the definition of the map concerned; none is a blind assumption.

\section*{Declaration of generative AI and AI-assisted technologies in the manuscript preparation process}
As described in the preceding Section,
AxiomProver (an AI tool under development)
was used to produce a formal verification of the problem.
The paper was written without AI.


\begin{thebibliography}{99}

\bibitem{BU}
A. Berkovich and A. K. Uncu,
\emph{On partitions with fixed number of even-indexed and odd-indexed odd parts},
J. Number Theory \textbf{167} (2016), 7--30.
\href{https://doi.org/10.1016/j.jnt.2016.02.031}{doi:10.1016/j.jnt.2016.02.031}.

\bibitem{Bess1991}
C. Bessenrodt,
\emph{A combinatorial proof of a refinement of the Andrews--Olsson partition
identity},
European J. Combin. \textbf{12} (1991), 271--276.

\bibitem{Bess1995}
C. Bessenrodt,
\emph{Generalizations of the Andrews--Olsson partition identity},
Discrete Math. \textbf{141} (1995), 11--22.

\bibitem{CGJL}
W. Y. C. Chen, H. Y. Gao, K. Q. Ji, and M. Y. X. Li,
\emph{A unification of two refinements of Euler's partition theorem},
Ramanujan J. \textbf{23} (2010), 137--149.
\href{https://doi.org/10.1007/s11139-008-9156-7}{doi:10.1007/s11139-008-9156-7}.

\bibitem{Lean}
L.~de~Moura, S.~Kong, J.~Avigad, F.~van~Doorn, and J.~von~Raumer,
The {L}ean theorem prover (system description),
in \emph{Automated Deduction -- CADE-25},
Lecture Notes in Computer Science~9195, Springer, 2015, 378--388.

\bibitem{DM}
A. Dhar and A. Mukhopadhyay,
\emph{A bijective proof of an identity of Berkovich and Uncu},
S\'em. Lothar. Combin. \textbf{90} (2026), Article B90a, 16 pp.

\bibitem{FZ}
S. Fu and J. Zeng,
\emph{A unifying combinatorial approach to refined little G\"ollnitz and
Capparelli's companion identities},
Adv. in Appl. Math. \textbf{98} (2018), 127--154.
\href{https://doi.org/10.1016/j.aam.2018.03.005}{doi:10.1016/j.aam.2018.03.005}.

\bibitem{Glaisher}
J. W. L. Glaisher,
\emph{A theorem in partitions},
Messenger Math. \textbf{12} (1883), 158--170.

\end{thebibliography}
\end{document}